\theoremstyle{plain}
\newtheorem{thm}{Theorem}[section]
\newtheorem{cor}{Corollary}[section]
\newtheorem{lem}{Lemma}[section]
\theoremstyle{remark}
\newtheorem{rem}{Remark}[section]
\DeclareMathOperator{\td}{d}
\numberwithin{equation}{section}
\begin{document}

\title[Integral representation and properties of Bernoulli numbers]
{An integral representation, some inequalities, and complete monotonicity of Bernoulli numbers of the second kind}

\author[F. Qi]{Feng Qi}
\address[Qi]{Department of Mathematics, School of Science, Tianjin Polytechnic University, Tianjin City, 300387, China}
\email{\href{mailto: F. Qi <qifeng618@gmail.com>}{qifeng618@gmail.com}, \href{mailto: F. Qi <qifeng618@hotmail.com>}{qifeng618@hotmail.com}, \href{mailto: F. Qi <qifeng618@qq.com>}{qifeng618@qq.com}}
\urladdr{\url{http://qifeng618.wordpress.com}}

\author[X.-J. Zhang]{Xiao-Jing Zhang}
\address[Zhang]{The 59th Middle School, Jianxi District, Luoyang City, Henan Province, 471000, China; Department of Mathematics, School of Science, Tianjin Polytechnic University, Tianjin City, 300387, China}
\email{\href{mailto: X.-J. Zhang <xiao.jing.zhang@qq.com>}{xiao.jing.zhang@qq.com}}

\begin{abstract}
In the paper, the authors discover an integral representation, some inequalities, and complete monotonicity of Bernoulli numbers of the second kind.
\end{abstract}

\keywords{Bernoulli numbers of the second kind; integral representation; inequality; completely monotonic sequence; Cauchy integral formula}

\subjclass[2010]{Primary 11B68; Secondary 11B83, 26A48, 30E20, 33B99}

\thanks{This paper was typeset using \AmS-\LaTeX}

\maketitle

\section{Introduction}

In number theory, Bernoulli numbers of the second kind $b_n$ for $n\in\{0\}\cup\mathbb{N}$ may be generated by
\begin{equation}\label{bernoulli-second-dfn}
\frac{x}{\ln(1+x)}=\sum_{n=0}^\infty b_nx^n.
\end{equation}
They are also known as Cauchy numbers of the first kind (see~\cite[p.~294]{Comtet-Combinatorics-74}), Gregory coefficients, or logarithmic numbers. The first few Bernoulli numbers of the second kind $b_n$ are
\begin{equation}
\begin{aligned}\label{Berno-5-values}
b_0&=1, & b_1&=\frac12, & b_2&=-\frac1{12}, & b_3&=\frac1{24}, & b_4&=-\frac{19}{720}, & b_5&=\frac3{160}.
\end{aligned}
\end{equation}

The first main result of this paper is the following integral representation of $b_n$ for $n\in\mathbb{N}$.

\begin{thm}\label{thm-Bernoulli-Integ-Form}
Bernoulli numbers of the second kind $b_n$ may be represented as
\begin{equation}\label{Bernoulli-2rd-int}
b_n=(-1)^{n+1}\int_1^\infty\frac{1}{\{[\ln(t-1)]^2+\pi^2\}t^{n}}\td t, \quad n\in\mathbb{N}.
\end{equation}
\end{thm}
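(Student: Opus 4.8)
The plan is to read off the numbers $b_n$ from the Maclaurin expansion of $F(z)$, using as a bridge the elementary identity
\[
\frac{z}{\ln(1+z)}=(1+z)F(z),
\]
which is immediate from the definition~\eqref{F(z)} of $F$. Since the left-hand side is, by~\eqref{bernoulli-second-dfn}, exactly the generating function $\sum_{n=0}^\infty b_nz^n$, it suffices to compute the Maclaurin coefficients of $F(z)$ from the integral representation~\eqref{thm-zhang-li-qi-eq} and then apply the factor $(1+z)$.

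First I would expand the Cauchy kernel in~\eqref{thm-zhang-li-qi-eq} geometrically: for $t\ge1$ and $|z|<1$ one has $\frac1{z+t}=\sum_{n=0}^\infty\frac{(-1)^nz^n}{t^{n+1}}$. Abbreviating the density in~\eqref{thm-zhang-li-qi-eq} by $\phi(t)=\frac{t}{(t-1)\{[\ln(t-1)]^2+\pi^2\}}$ and integrating term by term, I obtain $F(z)=\sum_{n=0}^\infty c_nz^n$ on $|z|<1$, where
\[
c_n=(-1)^n\int_1^\infty\frac{1}{(t-1)\{[\ln(t-1)]^2+\pi^2\}\,t^{n}}\td t.
\]

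Matching coefficients in $\sum_{n=0}^\infty b_nz^n=(1+z)\sum_{n=0}^\infty c_nz^n$ then yields $b_0=c_0$ together with $b_n=c_n+c_{n-1}$ for $n\ge1$. The decisive step is a cancellation: since $(-1)^{n-1}=-(-1)^n$,
\[
c_n+c_{n-1}=(-1)^n\int_1^\infty\frac1{(t-1)\{[\ln(t-1)]^2+\pi^2\}}\biggl(\frac1{t^{n}}-\frac1{t^{n-1}}\biggr)\td t,
\]
and the bracket equals $\frac{1-t}{t^{n}}$, whose numerator cancels the factor $t-1$ in the denominator and supplies one extra minus sign. This collapses the expression to $(-1)^{n+1}\int_1^\infty\frac{\td t}{\{[\ln(t-1)]^2+\pi^2\}\,t^{n}}$, which is precisely~\eqref{Bernoulli-2rd-int}. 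One may check separately that $b_0=c_0=1$ via the substitution $u=\ln(t-1)$; note also that~\eqref{Bernoulli-2rd-int} genuinely requires $n\ge1$, as the corresponding integral diverges when $n=0$.

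The one place demanding care, and the main obstacle, is the interchange of summation and integration leading to the formula for $c_n$. I would justify it by Fubini's theorem after verifying absolute convergence: for $|z|=r<1$,
\[
\sum_{n=0}^\infty\int_1^\infty\phi(t)\frac{r^n}{t^{n+1}}\td t=\int_1^\infty\frac{\phi(t)}{t-r}\td t,
\]
and the last integrand behaves like $\frac1{(t-1)[\ln(t-1)]^2}$ as $t\to1^+$ and like $\frac1{t(\ln t)^2}$ as $t\to\infty$, both of which are integrable. Hence the term-by-term integration is legitimate throughout the disc $|z|<1$, on which the Maclaurin series of $F$ converges, and the argument is complete.
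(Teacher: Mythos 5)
Your proof is correct, and it reaches \eqref{Bernoulli-2rd-int} by a genuinely different mechanism than the paper, although both arguments rest on the same foundation (the representation \eqref{thm-zhang-li-qi-eq} of Theorem~\ref{thm-zhang-li-qi}) and on the same decisive cancellation of a factor $1-t$ against the $t-1$ in the denominator of the density. The paper multiplies \eqref{thm-zhang-li-qi-eq} by $1+x$, differentiates $k$ times under the integral sign --- using $\bigl(\frac{1+x}{x+t}\bigr)^{(k)}=\bigl(1+\frac{1-t}{x+t}\bigr)^{(k)}=(1-t)\frac{(-1)^k k!}{(x+t)^{k+1}}$, where the cancellation happens at the level of derivatives --- then equates the result \eqref{thm-Bernoulli-2rd-eq} with the $k$-fold differentiated series \eqref{bernoulli-second-dfn} and lets $x\to0^+$ to isolate $k!\,b_k$. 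You instead expand the Cauchy kernel $\frac1{z+t}$ geometrically, integrate term by term, and obtain $b_n=c_n+c_{n-1}$ from the Cauchy product with $1+z$, so the same cancellation appears at the level of coefficients; the two proofs are essentially dual ways of reading off Maclaurin coefficients from a Stieltjes transform. What your route buys: the single analytic interchange you need (sum versus integral) is explicitly justified by Fubini--Tonelli, whereas the paper's differentiation under the integral sign and its passage of the limit $x\to0^+$ through the integral are left implicit (both routine, by dominated convergence, but unproved there); your checks that $c_0=b_0=1$ and that \eqref{Bernoulli-2rd-int} diverges for $n=0$ are also correct and clarify why the formula starts at $n=1$. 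What the paper's route buys: it yields along the way the closed form \eqref{thm-Bernoulli-2rd-eq} for every derivative of $x/\ln(1+x)$ at an arbitrary $x>0$, not merely the coefficients at the origin, and it never needs to discuss radii of convergence.
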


Recall from~\cite[p.~108, Definition~4]{widder} that a sequence $\{\mu_n\}_{0\le n\le\infty}$ is said to be completely monotonic if its elements are non-negative and its successive differences are alternatively non-negative, that is
\begin{equation}
(-1)^k\Delta^k\mu_n\ge0
\end{equation}
for $n,k\ge0$, where
\begin{equation}
\Delta^k\mu_n=\sum_{m=0}^k(-1)^m\binom{k}{m}\mu_{n+k-m}.
\end{equation}
Recall from~\cite[p.~163, Definition~14a]{widder} that a completely monotonic sequence $\{a_n\}_{n\ge0}$ is minimal if it ceases to be completely monotonic when $a_0$ is decreased.
\par
Let $\lambda=(\lambda_1,\lambda_2,\dotsc,\lambda_{n})\in\mathbb{R}^{n}$ and $\mu=(\mu_1,\mu_2,\dotsc, \mu_{n})\in\mathbb{R}^{n}$. A sequence $\lambda$ is said to be majorized by $\mu$ \textup{(}in symbols $\lambda\preceq \mu$\textup{)} if $\sum_{\ell=1}^k \lambda_{[\ell]}\le\sum_{\ell=1}^k \mu_{[\ell]}$ for $k=1,2,\dotsc,n-1$ and $\sum_{\ell=1}^n \lambda_\ell=\sum_{\ell=1}^n\mu_\ell$, where $\lambda_{[1]}\ge \lambda_{[2]}\ge \dotsm \ge \lambda_{[n]}$ and $\mu_{[1]}\ge \mu_{[2]}\ge\dotsm \ge \mu_{[n]}$ are rearrangements of $\lambda$ and $\mu$ in a descending order.
A sequence $\lambda$ is said to be strictly majorized by $\mu$ $($in symbols $\lambda \prec \mu)$ if $\lambda$ is not a permutation of $\mu$.
\par
Basing on Theorem~\ref{thm-Bernoulli-Integ-Form}, the following inequalities and properties of Bernoulli numbers of the second kind $b_n$ are discovered.

\begin{thm}\label{Ber-minimal-thm}
The infinite sequence $\{(-1)^{n}b_{n+1}\}_{n\ge0}$ is completely monotonic and minimal.
\end{thm}

\begin{thm}\label{b(n)-matrix-thm}
Let $m\in\mathbb{N}$ and let $n$ and $a_k$ for $1\le k\le m$ be nonnegative integers. Then
\begin{equation}\label{matrix-1f}
\bigl|(a_k+a_j)!b_{a_k+a_j+1}\bigr|_m\ge0
\end{equation}
and
\begin{equation}\label{matrix-2f}
\bigl|(-1)^{a_k+a_j}{(a_k+a_j)!}b_{a_k+a_j+1}\bigr|_m\ge0,
\end{equation}
where $|a_{kj}|_m$ denotes a determinant of order $m$ with elements $a_{kj}$.
\end{thm}

\begin{thm}\label{lambda-mu-thm}
Let $m\in\mathbb{N}$ and let $\lambda$ and $\mu$ be two $m$-tuples of nonnegative numbers such that $\lambda\preceq\mu$. Then
\begin{equation}\label{lambda-mu-eq}
\Biggl|\prod_{\ell=1}^m {\lambda_\ell!}b_{\lambda_\ell+1}\Biggr|\le \Biggl|\prod_{\ell=1}^m {\mu_\ell!}b_{\mu_\ell+1}\Biggr|.
\end{equation}
\end{thm}

\begin{cor}\label{b(n)-log-Convex-thm}
The infinite sequence $\{(-1)^nn!b_{n+1}\}_{n\ge0}$ is logarithmically convex.
\end{cor}

\section{Lemmas}

For prove our main results, we need the following two integral representations.

\begin{lem}[{\cite[p.~2130]{Berg-Pedersen-ball-PAMS}}]
Let $\mathbb{C}$ be the set of all complex numbers and let
\begin{equation}
\ln z=\ln|z|+i\arg z,
\end{equation}
be the principal branch of the holomorphic extension of $\ln x$ from the open half-line $(0,\infty)$ to the cut plane
$
\mathcal{A}=\mathbb{C}\setminus(-\infty,0],
$
where $-\pi<\arg z<\pi$ and $i=\sqrt{-1}\,$.
The function $\frac1{\ln(1+z)}$ for $z\in\mathbb{C}\setminus(-\infty,0]$ has the integral representation
\begin{equation}\label{recip-ln(1+z)}
\frac1{\ln(1+z)}=\frac1{z}+\int_1^\infty\frac1{[\ln(t-1)]^2+\pi^2} \frac{\td t}{z+t}.
\end{equation}
\end{lem}

\begin{lem}\label{thm-zhang-li-qi}
Let $\mathbb{C}$ be the set of all complex numbers. The function
\begin{equation}\label{F(z)}
F(z)=
\begin{cases}
 \dfrac{z}{(1+z)\ln(1+z)}, & z\in\mathbb{C}\setminus(-\infty,-1]\setminus\{0\}\\
 1, & z=0
\end{cases}
\end{equation}
has the integral representation
\begin{equation}\label{thm-zhang-li-qi-eq}
F(z)=\int_0^\infty \frac{t+1}{t[(\ln t)^2 +\pi^2]}\frac{\td t}{t+1+z},\quad z\in\mathbb{C}\setminus(-\infty,-1].
\end{equation}
\end{lem}

The proof of Lemma~\ref{thm-zhang-li-qi} will be carried out in Section~\ref{sec-proof} below.

\section{Proofs of theorems}

Now we start out to prove Theorems~\ref{thm-Bernoulli-Integ-Form} to~\ref{lambda-mu-thm} and Corollary~\ref{b(n)-log-Convex-thm} as follows.

\begin{proof}[First proof of Theorem~\ref{thm-Bernoulli-Integ-Form}]
By~\eqref{recip-ln(1+z)}, we have
\begin{equation}\label{frac(x)(ln(1+x))-eq1}
\frac{x}{\ln(1+x)}=1+\int_1^\infty\frac1{[\ln(t-1)]^2+\pi^2} \frac{x}{x+t}\td t
\end{equation}
and
\begin{equation}
\begin{split}\label{deriv-int-expression}
\biggl[\frac{x}{\ln(1+x)}\biggr]^{(k)}&=\int_1^\infty\frac1{[\ln(t-1)]^2+\pi^2} \biggl(\frac{x}{x+t}\biggr)^{(k)}\td t\\
&=\int_1^\infty\frac1{[\ln(t-1)]^2+\pi^2} \biggl(1-\frac{t}{x+t}\biggr)^{(k)}\td t\\
&=(-1)^{k+1}k!\int_1^\infty\frac{t}{[\ln(t-1)]^2+\pi^2} \frac1{(x+t)^{k+1}}\td t
\end{split}
\end{equation}
for $k\in\mathbb{N}$. On the other hand, by~\eqref{bernoulli-second-dfn}, we also have
\begin{equation}\label{deriv-dfn-second}
\biggl[\frac{x}{\ln(1+x)}\biggr]^{(k)}=\sum_{n=k}^\infty b_n\frac{n!}{(n-k)!}x^{n-k}.
\end{equation}
Combining~\eqref{deriv-int-expression} with~\eqref{deriv-dfn-second} leads to
\begin{equation*}
\sum_{n=k}^\infty b_n\frac{n!}{(n-k)!}x^{n-k}=(-1)^{k+1}k!\int_1^\infty\frac{t}{[\ln(t-1)]^2+\pi^2} \frac1{(x+t)^{k+1}}\td t.
\end{equation*}
Letting $x\to0^+$ on both sides of the above equation produces
\begin{equation*}
k!b_k=(-1)^{k+1}k!\int_1^\infty\frac1{[\ln(t-1)]^2+\pi^2} \frac1{t^{k}}\td t.
\end{equation*}
Thus, the formula~\eqref{Bernoulli-2rd-int} is proved.
\end{proof}

\begin{proof}[Second proof of Theorem~\ref{thm-Bernoulli-Integ-Form}]
By the integral representation~\eqref{thm-zhang-li-qi-eq}, we have
\begin{equation}\label{frac(x)(ln(1+x))-eq2}
\frac{x}{\ln(1+x)}=\int_1^\infty\dfrac{t}{(t-1)\{[\ln(t-1)]^2+\pi^2\}}\frac{1+x}{x+t}\td t
\end{equation}
and
\begin{equation}
\begin{split}\label{thm-Bernoulli-2rd-eq}
\biggl[\frac{x}{\ln(1+x)}\biggr]^{(k)} &=\int_1^\infty\dfrac{t}{(t-1)\{[\ln(t-1)]^2+\pi^2\}}\biggl(\frac{1+x}{x+t}\biggr)^{(k)}\td t\\
&=\int_1^\infty\dfrac{t}{(t-1)\{[\ln(t-1)]^2+\pi^2\}}\biggl(1+\frac{1-t}{x+t}\biggr)^{(k)}\td t\\
&=(-1)^{k+1}k!\int_1^\infty\dfrac{t}{[\ln(t-1)]^2+\pi^2}\frac{1}{(x+t)^{k+1}}\td t\\
\end{split}
\end{equation}
for $k\in\mathbb{N}$. Combining~\eqref{thm-Bernoulli-2rd-eq} with~\eqref{deriv-dfn-second} leads to
\begin{equation}\label{combine-two-eq-bernoulli2}
\sum_{n=k}^\infty b_n\frac{n!}{(n-k)!}x^{n-k}=(-1)^{k+1}k!\int_1^\infty\frac{t}{[\ln(t-1)]^2+\pi^2} \frac1{(x+t)^{k+1}}\td t.
\end{equation}
Letting $x\to0^+$ on both sides of~\eqref{combine-two-eq-bernoulli2} yields the formula~\eqref{Bernoulli-2rd-int}. The proof of Theorem~\ref{thm-Bernoulli-Integ-Form} is complete.
\end{proof}

\begin{proof}[First proof of Theorem~\ref{Ber-minimal-thm}]
Theorem~4a in~\cite[p.~108]{widder} reads that a necessary and sufficient condition that the sequence $\{\mu_n\}_0^\infty$ should have the expression
\begin{equation}\label{mu=alpha-moment}
\mu_n=\int_0^1t^n\td\alpha(t)
\end{equation}
for $n\ge0$, where $\alpha(t)$ is non-decreasing and bounded for $0\le t\le1$, is that it should be completely monotonic. Theorem~14a in~\cite[p.~164]{widder} states that a completely monotonic sequence $\{\mu_n\}_{n\ge0}$ is minimal if and only if the equality~\eqref{mu=alpha-moment} is valid for $n\ge0$ and $\alpha(t)$ is a non-decreasing bounded function continuous at $t=0$.
\par
Setting in the equality~\eqref{mu=alpha-moment}
\begin{equation}
\alpha(t)=\int_0^t\frac{1}{s\{[\ln(1/s-1)]^2+\pi^2\}}\td s
\end{equation}
for $t\in[0,1]$ and $\alpha(1)=b_1=\frac12$ yields the required complete monotonicity and minimality.
\end{proof}

\begin{proof}[Second proof of Theorem~\ref{Ber-minimal-thm}]
From~\eqref{Bernoulli-2rd-int}, it follows that for $n\in\mathbb{N}$
\begin{align*}
(-1)^{n+1}b_n&=\int_1^\infty\frac{1}{\{[\ln(t-1)]^2+\pi^2\}t^{n}}\td t\\
&=\int_1^0\frac{1}{\{[\ln(1/s-1)]^2+\pi^2\}}s^n\td\biggl(\frac1s\biggr)\\
&=\int_0^1\frac{1}{\{[\ln(1/s-1)]^2+\pi^2\}}s^{n-2}\td s\\
&=\int_0^1\frac{1}{s\{[\ln(1/s-1)]^2+\pi^2\}}s^{n-1}\td s\\
&\triangleq c_{n-1}.
\end{align*}
Since $c_0=b_1=\frac12$ and the function $\frac{1}{s\{[\ln(1/s-1)]^2+\pi^2\}}$ is positive on $(0,1)$, then the complete monotonicity and minimality of the sequence $\{c_n\}_0^\infty$ is readily obtained. The proof of Theorem~\ref{Ber-minimal-thm} is complete.
\end{proof}

\begin{proof}[Proof of Theorem~\ref{b(n)-matrix-thm}]
A function $f$ is said to be completely monotonic on an interval $I$ if $f$ has derivatives of all
orders on $I$ and $(-1)^{n}f^{(n)}(x)\ge0$ for $x\in I$ and $n\ge0$. See~\cite[Chapter~XIII]{mpf-1993} and~\cite[Chapter~IV]{widder}.
\par
From the proofs of Theorem~\ref{thm-Bernoulli-Integ-Form}, we observe that
\begin{equation}\label{b(n)-limit}
b_n=(-1)^{n+1}\lim_{x\to0^+}h_n(x)
\end{equation}
and
\begin{equation}\label{h(x)-dfn-eq}
h_n(x)=\int_1^\infty\frac{1}{\{[\ln(t-1)]^2+\pi^2\}(t+x)^{n}}\td t
\end{equation}
is completely monotonic on $[0,\infty)$.
\par
In~\cite{two-place}, or see~\cite[p.~367]{mpf-1993}, it was obtained that if $f$ is a completely monotonic function on $[0,\infty)$, then
\begin{equation}\label{matrix-eq-1}
\bigl|f^{(a_i+a_j)}(x)\bigr|_m\ge0
\end{equation}
and
\begin{equation}\label{matrix-eq-2}
\bigl|(-1)^{a_i+a_j}f^{(a_i+a_j)}(x)\bigr|_m\ge0,
\end{equation}
where $|a_{ij}|_m$ denotes a determinant of order $m$ with elements $a_{ij}$ and $a_i$ for $1\le i\le m$ are nonnegative integers. Applying $f$ in~\eqref{matrix-eq-1} and~\eqref{matrix-eq-2} to the function $h_n(x)$ yields
\begin{equation}\label{matrix-eq-1h}
\bigl|h_n^{(a_i+a_j)}(x)\bigr|_m\ge0
\end{equation}
and
\begin{equation}\label{matrix-eq-2h}
\bigl|(-1)^{a_i+a_j}h_n^{(a_i+a_j)}(x)\bigr|_m\ge0,
\end{equation}
that is,
\begin{equation}\label{matrix-eq-1b}
\biggl|(-1)^{a_i+a_j}\frac{(n+a_i+a_j-1)!}{(n-1)!}h_{n+a_i+a_j}(x)\biggr|_m\ge0
\end{equation}
and
\begin{equation}\label{matrix-eq-2b}
\biggl|\frac{(n+a_i+a_j-1)!}{(n-1)!}h_{n+a_i+a_j}(x)\biggr|_m\ge0.
\end{equation}
Letting $x\to0^+$ in~\eqref{matrix-eq-1b} and~\eqref{matrix-eq-2b} and making use of~\eqref{b(n)-limit} produce
\begin{equation}\label{matrix-1b}
\biggl|(-1)^{a_i+a_j}\frac{(n+a_i+a_j-1)!}{(n-1)!}(-1)^{n+a_i+a_j+1}b_{n+a_i+a_j}\biggr|_m\ge0
\end{equation}
and
\begin{equation}\label{matrix-2b}
\biggl|\frac{(n+a_i+a_j-1)!}{(n-1)!}(-1)^{n+a_i+a_j+1}b_{n+a_i+a_j}\biggr|_m\ge0.
\end{equation}
Further simplifying~\eqref{matrix-1b} and~\eqref{matrix-2b} leads to
\begin{equation*}
\bigl|(-1)^{n+1}{(n+a_i+a_j-1)!}b_{n+a_i+a_j}\bigr|_m\ge0
\end{equation*}
and
\begin{equation*}
\bigl|(-1)^{n+a_i+a_j+1}{(n+a_i+a_j-1)!}b_{n+a_i+a_j}\bigr|_m\ge0,
\end{equation*}
which are equivalent to~\eqref{matrix-1f} and~\eqref{matrix-2f}. Theorem~\ref{b(n)-matrix-thm} is thus proved.
\end{proof}

\begin{proof}[Proof of Theorem~\ref{lambda-mu-thm}]
In~\cite[p.~106, Theorem~A]{haerc1} and~\cite[p.~367, Theorem~2]{mpf-1993}, a minor correction of~\cite[Theorem~1]{finkjmaa82}, it was obtained that if $f$ is a completely monotonic function on $(0,\infty)$ and $\lambda\preceq\mu$, then
\begin{equation}\label{finkjmaa82=ineq3.2}
\Biggl|\prod_{i=1}^nf^{(\lambda_i)}(x)\Biggr|\le \Biggl|\prod_{i=1}^nf^{(\mu_i)}(x)\Biggr|.
\end{equation}
Applying the inequality~\eqref{finkjmaa82=ineq3.2} to $h_n(x)$, defined by~\eqref{h(x)-dfn-eq}, creates
\begin{equation*}
\Biggl|\prod_{i=1}^m(-1)^{\lambda_i} \frac{(n+\lambda_i-1)!}{(n-1)!}h_{n+\lambda_i}(x)\Biggr|
\le \Biggl|\prod_{i=1}^m (-1)^{\mu_i}\frac{(n+\mu_i-1)!}{(n-1)!}h_{n+\mu_i}(x)\Biggr|
\end{equation*}
which can be simplified as
\begin{equation*}
\Biggl|\prod_{i=1}^m {(n+\lambda_i-1)!}h_{n+\lambda_i}(x)\Biggr|
\le \Biggl|\prod_{i=1}^m {(n+\mu_i-1)!}h_{n+\mu_i}(x)\Biggr|.
\end{equation*}
Further taking $x\to0^+$ and utilizing~\eqref{b(n)-limit} turn out
\begin{equation*}
\Biggl|\prod_{i=1}^m {(n+\lambda_i-1)!}(-1)^{n+\lambda_i+1}b_{n+\lambda_i}\Biggr|\le \Biggl|\prod_{i=1}^m {(n+\mu_i-1)!}(-1)^{n+\mu_i+1}b_{n+\mu_i}\Biggr|
\end{equation*}
which is equivalent to~\eqref{lambda-mu-eq}. The proof of Theorem~\ref{lambda-mu-thm} is complete.
\end{proof}

\begin{proof}[Proof of Corollary~\ref{b(n)-log-Convex-thm}]
It is clear that $(i,i+2)\succ(i+1,i+1)$ for $i\ge0$. Therefore, by virtue of~\eqref{lambda-mu-eq}, we have
\begin{equation}\label{b(n)-log-convex-eq}
(i!b_{i+1})[(i+2)!b_{i+3}]\ge [(i+1)!b_{i+2}]^2.
\end{equation}
This implies the required logarithmic convexity.
\par
This conclusion can also be deduced from Theorem~\ref{b(n)-matrix-thm}.
The proof of Theorem~\ref{b(n)-log-Convex-thm} is thus complete.
\end{proof}

\section{Proofs of Lemma~\ref{thm-zhang-li-qi}}\label{sec-proof}

Now we are in a position to prove Lemma~\ref{thm-zhang-li-qi} as follows.

\begin{proof}[First proof]
For $z= \varepsilon e^{\theta i}$ with $\theta\in\bigl[-\frac\pi2,\frac\pi2\bigr]$ and $ \varepsilon\in(0,1)$, by standard argument, we have
\begin{equation*}
|zF(z-1)|^2=\biggl|\frac{\varepsilon e^{\theta i}-1}{\ln(\varepsilon e^{\theta i})}\biggr|^2
=\frac{1-2\varepsilon \cos\theta+\varepsilon^2}{(\ln\varepsilon)^2+\theta^2}\to0
\end{equation*}
uniformly as $\varepsilon\to0^+$. Consequently,
\begin{equation}\label{zF(z)=0}
 \lim_{ \varepsilon\to0^+}[zF(z-1)]=0
\end{equation}
uniformly.
\par
For $\theta\in(-\pi,\pi)$ and $z=re^{\theta i}$, by standard argument, we have
\begin{equation}\label{F(z)-infinity}
|F(z-1)|=\biggl|\frac{re^{\theta i}-1}{re^{\theta i}\ln(re^{\theta i})}\biggr|
=\sqrt{\frac{1+2r\cos\theta+r^2}{r^2[(\ln r)^2+\theta^2]}}\,\to0
\end{equation}
uniformly as $r\to\infty$.
\par
For $t\in(0,\infty)$ and $\varepsilon\in(0,1)$, we have
\begin{align*}
F(-t-1+\varepsilon i)&=\frac{-t-1+\varepsilon i}{(-t+\varepsilon i)\ln(-t+\varepsilon i)}\\
&=\frac{-t-1+\varepsilon i}{(-t+\varepsilon i)[\ln|-t+\varepsilon i|+i\arg(-t+\varepsilon i)]}\\
&=\frac{-t-1+\varepsilon i}{(-t+\varepsilon i)\bigl[\ln|-t+\varepsilon i| +i\bigl(\pi-\arctan\frac{\varepsilon}t\bigr)\bigr]}\\
&\to\frac{t+1}{t(\ln t +\pi i)}\\
&=\frac{(t+1)(\ln t -\pi i)}{t[(\ln t)^2 +\pi^2]}
\end{align*}
as $\varepsilon\to0^+$. In other words, for $t\in(0,\infty)$,
\begin{equation}\label{im-limit}
\lim_{\varepsilon\to0^+}\Im F(-t-1+\varepsilon i)=-\frac{\pi(t+1)}{t[(\ln t)^2 +\pi^2]}.
\end{equation}
\par
Let $D$ be a bounded domain with piecewise smooth boundary. If $f(z)$ is analytic on $D$ and extendable smoothly to the boundary of $D$, then
\begin{equation}\label{cauchy-formula}
f(z)=\frac1{2\pi i}\oint_{\partial D}\frac{f(w)}{w-z}\td w,\quad z\in D.
\end{equation}
In the literature, we call~\eqref{cauchy-formula} Cauchy integral formula. See~\cite[p.~113]{Gamelin-book-2001}.
For any fixed point $z_0=x_0+iy_0\in\mathbb{C}\setminus(-\infty,0]$, choose $\varepsilon$ and $r$ such that \begin{equation*}
\begin{cases}
0<\varepsilon<|y_0|\le|z_0|<r, & y_0\ne0,\\
0<\varepsilon<x_0=|z_0|<r, & y_0=0,\\
\end{cases}
\end{equation*}
and consider the positively oriented contour $C(\varepsilon,r)$ in $\mathbb{C}\setminus(-\infty,-1]$ consisting of the half circle $z=-1+\varepsilon e^{\theta i}$ for $\theta\in\bigl[-\frac\pi2,\frac\pi2\bigr]$ and the half lines $z=-1+x\pm \varepsilon i$ for $x\le0$ until they cut the circle $|z+1|=r$, which close the contour at the points $-1-r(\varepsilon)\pm \varepsilon i$, where $0<r(\varepsilon)\to r$ as $\varepsilon\to0$.
By the formula~\eqref{cauchy-formula}, we have
\begin{multline}\label{F(z)-Cauchy-Apply}
F(z_0)=\frac1{2\pi i}\biggl[\int_{\pi/2}^{-\pi/2}\frac{i\varepsilon e^{\theta i}F\bigl(\varepsilon e^{\theta i}-1\bigr)} {\varepsilon e^{\theta i}-1-z_0}\td\theta +\int_{-r(\varepsilon)}^0 \frac{F(x-1+\varepsilon i)}{x-1+\varepsilon i-z_0}\td x \\
+\int_0^{-r(\varepsilon)}\frac{F(x-1-\varepsilon i)}{x-1-\varepsilon i-z_0}\td x +\int_{\arg[-r(\varepsilon)-\varepsilon i]}^{\arg[-r(\varepsilon)+\varepsilon i]}\frac{ir e^{\theta i}F\bigl(re^{\theta i}-1\bigr)}{re^{\theta i}-1-z_0}\td\theta\biggr].
\end{multline}
By the formula~\eqref{zF(z)=0}, it follows that
\begin{equation}\label{zf(z)=0}
\lim_{\varepsilon\to0^+}\int_{\pi/2}^{-\pi/2}\frac{i\varepsilon e^{\theta i}F\bigl(\varepsilon e^{\theta i}-1\bigr)} {\varepsilon e^{\theta i}-1-z_0}\td\theta=0.
\end{equation}
In virtue of the limit~\eqref{F(z)-infinity}, it can be derived that
\begin{equation}\label{big-circle-int=0}
\begin{split}
&\quad\lim_{\substack{\varepsilon\to0^+\\r\to\infty}} \int_{\arg[-r(\varepsilon)-\varepsilon i]}^{\arg[-r(\varepsilon)+\varepsilon i]}\frac{ir e^{\theta i}F\bigl(re^{\theta i}-1\bigr)}{re^{\theta i}-1-z_0}\td\theta\\
&=\lim_{r\to\infty}\int_{-\pi}^{\pi}\frac{ir e^{\theta i}F\bigl(re^{\theta i}-1\bigr)}{re^{\theta i}-1-z_0}\td\theta\\
&=0.
\end{split}
\end{equation}
Making use of the obvious fact that $F(\overline{z})=\overline{F(z)}$ and the limit~\eqref{im-limit} yields that
\begin{align}
&\quad\int_{-r(\varepsilon)}^0 \frac{F(x-1+\varepsilon i)}{x-1+\varepsilon i-z_0}\td x
+\int_0^{-r(\varepsilon)}\frac{F(x-1-\varepsilon i)}{x-1-\varepsilon i-z_0}\td x \notag\\
&=\int_{-r(\varepsilon)}^0 \biggl[\frac{F(x-1+\varepsilon i)}{x-1+\varepsilon i-z_0} -\frac{F(x-1-\varepsilon i)}{x-1-\varepsilon i-z_0}\biggr]\td x\notag\\
&=\int_{-r(\varepsilon)}^0\frac{(x-1-\varepsilon i-z_0)F(x-1+\varepsilon i) -(x-1+\varepsilon i-z_0)F(x-1-\varepsilon i)} {(x-1+\varepsilon i-z_0)(x-1-\varepsilon i-z_0)}\td x\notag\\
&=2i\int_{-r(\varepsilon)}^0\frac{(x-1-z_0)\Im F(x-1+\varepsilon i) -\varepsilon\Re F(x-1+\varepsilon i)} {(x-1+\varepsilon i-z_0)(x-1-\varepsilon i-z_0)}\td x\notag\\
&\to2i\int_{-r}^0\frac{\lim_{\varepsilon\to0^+}\Im F(x-1+\varepsilon i)}{x-1-z_0}\td x\notag\\
&=-2i\int^r_0\frac{\lim_{\varepsilon\to0^+}\Im F(-t-1+\varepsilon i)}{t+1+z_0}\td t\notag\\
&\to-2i\int^\infty_0\frac{\lim_{\varepsilon\to0^+}\Im F(-t-1+\varepsilon i)}{t+1+z_0}\td t\notag\\
&=2\pi i\int_0^\infty \frac{t+1}{t[(\ln t)^2 +\pi^2]}\frac{\td t}{t+1+z_0} \label{level0lines}
\end{align}
as $\varepsilon\to0^+$ and $r\to\infty$. Substituting equations~\eqref{zf(z)=0}, \eqref{big-circle-int=0}, and~\eqref{level0lines} into~\eqref{F(z)-Cauchy-Apply} and simplifying produce the integral representation~\eqref{thm-zhang-li-qi-eq}. The proof of Lemma~\ref{thm-zhang-li-qi} is complete.
\end{proof}

\begin{proof}[Second proof]
In all treatments of Pick functions, a main example is the principal logarithm $\ln$ defined in the cut plane $\mathcal{A}$ as well as
\begin{equation}
-\frac1{\ln z}=-\frac1{z-1}+\int_{-\infty}^0\frac1{(t-z)[(\ln t)^2+\pi^2]}\td t.
\end{equation}
This formula is equivalent to~\cite[(1.4)]{Berg-JIPAM-2001}. Multiplying the identity
\begin{equation}
\int_0^\infty\frac1{t[(\ln t)^2+\pi^2]}=1
\end{equation}
by $\frac1z$ and inserting it in the previous formula yield
\begin{equation}
\frac{z-1}{z\ln z}=\int_0^\infty\biggl[\frac1{tz}+\frac{z-1}{z(t+z)}\biggr]\frac{\td t}{(\ln t)^2+\pi^2}
=\int_0^\infty\frac{1+t}{(t+z)[(\ln t)^2+\pi^2]}\td t,
\end{equation}
which is the formula~\eqref{thm-zhang-li-qi-eq}. The proof of Lemma~\ref{thm-zhang-li-qi} is complete.
\end{proof}

\section{Remarks}

Finally, we would like to give some remarks on something related to the integral representations~\eqref{recip-ln(1+z)} and~\eqref{thm-zhang-li-qi-eq}.

\begin{rem}
In~\cite[p.~230, 5.1.32]{abram}, it is listed that
\begin{equation}\label{ln-frac}
\ln\frac{b}a=\int_0^\infty\frac{e^{-au}-e^{-bu}}u\td u.
\end{equation}
As a result, we have
\begin{equation}\label{lnln-int}
\ln[\ln(1+x)]=\int_0^\infty\frac{e^{-u}-e^{-u\ln(1+x)}}u\td u =\int_0^\infty\frac{e^{-u}-(1+x)^{-u}}u\td u
\end{equation}
and, by a differentiation,
\begin{equation}
\begin{aligned}\label{frac1-ln-x+1-int}
\frac1{(1+x)\ln(1+x)}&=\int_0^\infty\frac1{(1+x)^{u+1}}\td u\\
&=\int_0^\infty\biggl[\frac1{\Gamma(1+u)}\int_0^\infty t^ue^{-(1+x)t}\td t\biggr]\td u\\
&=\int_0^\infty\biggl[\int_0^\infty\frac{t^u}{\Gamma(1+u)}\td u\biggr]e^{-(1+x)t}\td t,
\end{aligned}
\end{equation}
where $\Gamma(z)$ is the classical gamma function which may be defined for $\Re(z)>0$ by Euler's integral
\begin{equation}
\Gamma(z)=\int_0^\infty t^{z-1}e^{-t}\td t.
\end{equation}
The integral representation~\eqref{frac1-ln-x+1-int} means that $\frac1{(1+x)\ln(1+x)}$ is a completely monotonic function on $(0,\infty)$. In other words, the function $\frac1{\ln(1+x)}$ is logarithmically completely monotonic on $(0,\infty)$. More strongly, it was claimed in~\cite[p.~2130, (34)]{Berg-Pedersen-ball-PAMS} and~\cite[p.~12, (33)]{Berg-Pedersen-arXiv-0912.2185} that the function $\frac1{\ln(1+x)}$ is a Stieltjes transform. For information on the notions ``logarithmically completely monotonic function'' and ``Stieltjes transform'', please refer to~\cite[Remark~8]{subadditive-qi-guo-jcam.tex}, \cite[Section~1]{SCM-2012-0142.tex}, \cite[Remark~4.7]{Open-TJM-2003-Banach.tex}, the monograph~\cite{Schilling-Song-Vondracek-2nd} and plenty of closely-related references therein.
\par
From~\eqref{frac1-ln-x+1-int} and by integration by part, it is not difficult to obtain that
\begin{equation}\label{frac1-ln-x-int}
\frac1{\ln(1+x)}=\int_0^\infty\biggl[\int_0^\infty\frac{t^{u-1}} {\Gamma(u)}\td u\biggr]e^{-(1+x)t}\td t, \quad x>0.
\end{equation}
By induction and integration by part, we can obtain
\begin{equation}
\begin{aligned}\label{frac1ln-(1+x)-int}
\frac{(1+x)^k}{\ln(1+x)}&=\int_0^\infty\biggl[\int_0^\infty\frac{t^{u-k-1}} {\Gamma(u-k)}\td u\biggr]e^{-(1+x)t}\td t\\
&=\int_0^\infty\biggl[\int_{-k}^\infty\frac{t^{u-1}} {\Gamma(u)}\td u\biggr]e^{-(1+x)t}\td t
\end{aligned}
\end{equation}
for $x>0$ and $k\in\mathbb{Z}$, where $\mathbb{Z}$ denotes the set of all integers and the classical gamma function $\Gamma(z)$ may be defined for $z\in\mathbb{C}\setminus\{0,-1,-2,\dotsc\}$ by Euler's formula
\begin{equation}\label{gamma-dfn-C}
\Gamma(z)=\lim_{n\to\infty}\frac{n!n^z}{z(z+1)\dotsm(z+n)}.
\end{equation}
\end{rem}

\begin{rem}
By the way, the term $\frac1z$ in~\eqref{recip-ln(1+z)} was lost in~\cite[p.~2130, (34)]{Berg-Pedersen-ball-PAMS} and~\cite[p.~12, (33)]{Berg-Pedersen-arXiv-0912.2185} and was corrected in~\cite{Zhang-Li-Qi-Log.tex, Zhang-Xiao-Jing-Thesis.tex}.
\end{rem}

\begin{rem}
A function $f:I\subseteq(0,\infty)\to[0,\infty)$ is called a Bernstein function on $I$ if $f(x)$ has derivatives of all orders and $f'(x)$ is completely monotonic on $I$. See the monograph~\cite{Schilling-Song-Vondracek-2nd}.
We claim that the generating function $\frac{x}{\ln(1+x)}$ of Bernoulli numbers of the second kind $b_k$ is a Bernstein function on $(0,\infty)$. This can be proved by two approaches below.
\par
The integral representation~\eqref{frac(x)(ln(1+x))-eq1} shows us that the function $\frac{x}{\ln(1+x)}$ is positive and increasing on $(0,\infty)$. The integral representation~\eqref{deriv-int-expression} reveals that the first derivative of $\frac{x}{\ln(1+x)}$ is completely monotonic on $(0,\infty)$. So the function $\frac{x}{\ln(1+x)}$ is a Bernstein function on $(0,\infty)$.
\par
It is not difficult to see that
\begin{equation}
\frac{x}{\ln(1+x)}=\int_0^1(1+x)^t\td t
\end{equation}
and the function $(1+x)^t$ for $t\in(0,1)$ is a Bernstein function.
\end{rem}

\begin{rem}
This paper is a combined and revised version of the preprints~\cite{derivative-reciprocal-log.tex, Zhang-Li-Qi-Log.tex} and Chapter~5 of the thesis~\cite{Zhang-Xiao-Jing-Thesis.tex}.
\end{rem}

\end{document}